\newtheorem{theorem}{Theorem}[section]
\newtheorem{lemma}[theorem]{Lemma}
\newtheorem{prop}[theorem]{Proposition}
\theoremstyle{definition}
\theoremstyle{remark}
\newtheorem{remark}[theorem]{Remark}
\numberwithin{equation}{section}
\newcommand{\Q}{\mathbb{Q}}
\DeclareMathOperator{\codim}{codim}
\DeclareMathOperator{\rank}{rank}
\DeclareMathOperator{\edim}{edim}
\title{Quasitoric manifolds homeomorphic to homogeneous spaces}
\author{Michael Wiemeler}
\address{School of Mathematics,
    Alan Turing Building, The University of Manchester, Oxford Road, Manchester M13 9PL, UK}
\email{michael.wiemeler-2@manchester.ac.uk}
\thanks{Part of the research was supported by SNF Grants Nos. 200021-117701 and 200020-126795.}
\subjclass[2000]{Primary 57S15, 57S25.}
\keywords{quasitoric manifolds, homogeneous spaces, cohomogeneity one manifolds}
\begin{document}
\begin{abstract}
 We present some classification results for quasitoric manifolds \(M\) with \(p_1(M)=-\sum a_i^2\) for some \(a_i\in H^2(M)\) which admit an action of a compact connected Lie-group \(G\) such that \(\dim M/G \leq 1\). In contrast to Kuroki's work \cite{kuroki_pre_1_2009,kuroki_pre_2_2009} we do not require that the action of \(G\) extends the torus action on \(M\).
\end{abstract}

\maketitle

\section{Introduction}

Quasitoric manifolds are certain \(2n\)-dimensional manifolds on which an \(n\)-dimen\-sional torus acts such that the orbit space of this action may be identified with a simple convex polytope.
They were first introduced by Davis and Januszkiewicz \cite{davis91:_convex_coxet} in 1991.

In \cite{kuroki_pre_1_2009,kuroki_pre_2_2009} Kuroki studied quasitoric manifolds \(M\) which admit an extension of the torus action to an action of some compact connected Lie-group \(G\) such that \(\dim M/G \leq1\).
Here we drop the condition that the \(G\)-action extends the torus action in the case where the first Pontrjagin-class of \(M\) is equal to the negative of a sum of squares of elements of \(H^2(M)\).
In this note all cohomology groups are taken with coefficients in \(\Q\).
We have the following two results.

\begin{theorem}
\label{thm1}
  Let \(M\) be a quasitoric manifold with \(p_1(M)=-\sum a_i^2\) for some \(a_i\in H^2(M)\)  which is homeomorphic (or diffeomorphic) to a homogeneous space \(G/H\) with \(G\) a compact connected Lie-group.
  Then \(M\) is homeomorphic (diffeomorphic) to \(\prod S^2\).
  In particular, all Pontrjagin-classes of \(M\) vanish.
\end{theorem}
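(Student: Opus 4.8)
The plan is to play two descriptions of $p_1(M)$ against each other. \emph{Reductions first.} Being quasitoric, $M$ is simply connected, satisfies $H^{\mathrm{odd}}(M;\Q)=0$, has $\chi(M)>0$ (the number of vertices of the orbit polytope) and $b_2(M)\ge 1$ (it equals $m-n$ for a polytope with $m$ facets). Hence in a presentation $M\cong G/H$ I may take $H$ connected (a disconnected $H$ would yield a nontrivial connected cover $G/H_0\to G/H$), and $\chi(G/H)>0$ forces $\rank H=\rank G$ by the Hopf--Samelson theorem. Since the identity component $Z$ of the centre of $G$ lies in every maximal torus, $Z\subseteq H$, so $G=G_{ss}H$ and $G/H=G_{ss}/(G_{ss}\cap H)$; passing to universal covers I may assume $G$ is semisimple and simply connected. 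Finally the centre $\mathfrak z$ of $\mathfrak h$ has $\dim\mathfrak z=b_2(M)\ge 1$, so $H$ is not semisimple.

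Next, fix a common maximal torus $T$. By the equal-rank Cartan model $H^*(G/H;\Q)=\mathrm{Sym}(\mathfrak t^*)^{W_H}/\bigl(\mathrm{Sym}(\mathfrak t^*)^{W_G}_+\bigr)$, in particular $H^2(G/H;\Q)=(\mathfrak t^*)^{W_H}=\mathfrak z^*$. Computing $p_1(G/H)$ from the isotropy representation $\mathfrak g/\mathfrak h$ gives $p_1(G/H)=\overline{\sum_\alpha\alpha^2}$, the sum over the positive $G$-roots that are not $H$-roots; since $\sum_{\alpha\in\Phi_G^+}\alpha^2$ (a multiple of the restricted Killing form) is $W_G$-invariant it dies in $H^4$, whence $p_1(G/H)=-\overline{\sum_{\alpha\in\Phi_H^+}\alpha^2}$. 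On the other hand the hypothesis gives $p_1(M)=-\sum a_i^2$ with $a_i\in H^2(M;\Q)\cong\mathfrak z^*$; setting $q=\sum\tilde a_i^2\in\mathrm{Sym}^2(\mathfrak z^*)$ (a positive semidefinite form supported on $\mathfrak z$) and equating the two expressions — legitimate since rational Pontrjagin classes are homeomorphism invariants — I obtain in $\mathrm{Sym}^2(\mathfrak t^*)$ the degree-$2$ identity
\[
 \textstyle\sum_{\alpha\in\Phi_H^+}\alpha^2 \;=\; q+\sum_j c_j\kappa_j ,
\]
where the $\kappa_j$ are the Killing forms of the simple factors $\mathfrak g_j$ of $\mathfrak g$ (they span the $W_G$-invariants of $\mathrm{Sym}^2(\mathfrak t^*)$) and $c_j\in\Q$.

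Now read this as an identity of quadratic forms on $\mathfrak t=\bigoplus_j\mathfrak t_j$: the left side is positive semidefinite with radical exactly $\mathfrak z$, the form $q$ is positive semidefinite supported on $\mathfrak z$, and $\kappa_j$ is positive definite on $\mathfrak t_j$ and zero on the other summands. Were $G$ simple this is already contradictory: restriction to $\mathfrak t\cap[\mathfrak h,\mathfrak h]$ shows $c_1>0$, and then restriction to the nonzero space $\mathfrak z$ gives $0=q|_{\mathfrak z}+c_1\kappa_1|_{\mathfrak z}$ with $c_1\kappa_1|_{\mathfrak z}$ positive definite. In general a signature count forces $\sum_{c_j\le 0}\rank G_j=\tfrac12\dim M=:n$ and $\sum_{c_j>0}\rank G_j=\rank G-n$, and pushing the positivity analysis further should force $q=0$, force the simple factors with $c_j\le 0$ to be exactly $n$ factors of rank one whose joint maximal torus is $\mathfrak z$, and force $\mathfrak h=\bigl(\bigoplus_{c_j>0}\mathfrak g_j\bigr)\oplus\mathfrak z$. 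Then $G/H\cong\prod_{c_j>0}(G_j/G_j)\times\bigl(SU(2)/U(1)\bigr)^n=(S^2)^n$, so $M$ is homeomorphic (diffeomorphic) to $\prod S^2$. Since $TS^2\oplus\R$ is trivial, $\prod S^2$ is stably parallelizable, so all its Pontrjagin classes vanish; by Novikov's theorem $p_i(M)=0$ for all $i$.

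I expect the genuine obstacle to be this last step: disentangling the root-space blocks $\mathfrak t_j$ from $\mathfrak z$ and from $\mathfrak t\cap[\mathfrak h,\mathfrak h]$, and in particular excluding ``diagonally embedded'' semisimple parts of $H$, so that $q=0$ and the splitting $\mathfrak h=(\bigoplus_{c_j>0}\mathfrak g_j)\oplus\mathfrak z$ are actually forced rather than merely plausible (the simple case being the model). By contrast, the equal-rank computation of $p_1(G/H)$ in the second paragraph is routine, and the quasitoric hypothesis is used only through the elementary facts collected in the first paragraph.
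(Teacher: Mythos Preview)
Your approach is genuinely different from the paper's, and the gap you flag in the last paragraph is not merely technical: it is fatal to the strategy as written.  The difficulty is that you use the quasitoric hypothesis only through the soft facts in your first paragraph (simply connected, $H^{\mathrm{odd}}=0$, $\chi>0$, $b_2\ge 1$), and these are not enough to force $M\cong\prod S^2$.  Concretely, take $G$ simple of rank $\ge 2$ and $H=T$ a maximal torus, say $G=SU(3)$.  Then $G/T$ is simply connected, has $H^{\mathrm{odd}}=0$, $\chi=|W_G|>0$, $b_2=\rank G\ge 1$, and $p_1(G/T)=-\overline{\sum_{\alpha\in\Phi_T^+}\alpha^2}=0$, which is trivially of the form $-\sum a_i^2$.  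Your displayed identity becomes $0=q+c\,\kappa$, which is solved by $c\le 0$ and $q=-c\,\kappa$; your ``signature count'' would then give $\rank G=n=\tfrac12\dim(G/T)$, i.e.\ $\dim G=3\rank G$, which is false for $SU(3)$ or $G_2$.  Equivalently, $\chi(SU(3)/T)=6\ne 8=\chi\bigl((S^2)^3\bigr)$.  So neither the signature count nor the subsequent splitting of $\mathfrak h$ can be derived from the quadratic identity alone.  Note too that your ``$G$ simple'' paragraph already tacitly assumes $\mathfrak t\cap[\mathfrak h,\mathfrak h]\ne 0$; when $H=T$ that space is zero and no contradiction arises.

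What the paper does instead is use the quasitoric hypothesis in a much stronger way, via the half-dimensional torus action: $n\le T_t(M)$.  Having first observed (Lemma~\ref{sec:quas-manif-with-2}) that the assumption on $p_1$ makes $M$ a space of $q$-type in Hauschild's sense, it invokes two of Hauschild's theorems.  First, for a $q$-type space the isotropy of any $T$-fixed point under a semisimple action is a maximal torus, which forces $H=T$ outright (no root-space analysis needed).  Second, $T_t(G/T)=\rank G$, so $2n=\dim G-\rank G\le 2\rank G$, i.e.\ $\dim G\le 3\rank G$, and hence every simple factor of $G$ is $SU(2)$.  The missing ingredient in your argument is precisely an inequality of this type linking $n$ to $\rank G$; your root-theoretic identity for $p_1$ does not supply it.
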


\begin{theorem}
\label{sec:thm2}
  Let \(M\) be a quasitoric manifold with \(p_1(M)=-\sum a_i^2\) for some \(a_i\in H^2(M)\).
  Assume that the compact connected Lie-group \(G\) acts smoothly and almost effectively on \(M\) such that \(\dim M/G=1\).
  Then \(G\) has a finite covering group of the form \(\prod SU(2)\) or \(\prod SU(2)\times S^1\).
  Furthermore \(M\) is diffeomorphic to a \(S^2\)-bundle over a product of two-spheres.
\end{theorem}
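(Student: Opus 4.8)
The plan is to combine the structure theory of cohomogeneity one $G$-manifolds with the cohomological constraints imposed by quasitoricity, letting the Pontrjagin hypothesis $p_1(M)=-\sum a_i^2$ play the role that in Kuroki's work~\cite{kuroki_pre_1_2009,kuroki_pre_2_2009} was played by the assumption that the $G$-action extend the torus action.

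First I would set up the disk-bundle decomposition. A quasitoric manifold is simply connected, has $H^{\mathrm{odd}}(M)=0$ and $H^*(M)$ generated by $H^2(M)$, and $\chi(M)$ equals the number of vertices of the orbit polytope, so $\chi(M)\ge n+1\ge2$. Simple connectivity rules out $M/G=S^1$ (otherwise $M$ would fibre over $S^1$ with connected fibre, forcing a surjection $\pi_1(M)\twoheadrightarrow\mathbb Z$), so $M/G$ is a closed interval and
\[
  M \;=\; \bigl(G\times_{K_-}D^{l_-}\bigr)\cup_{G/H}\bigl(G\times_{K_+}D^{l_+}\bigr),
\]
with $H\subset K_\pm\subset G$, principal orbit $G/H$, and normal spheres $K_\pm/H=S^{l_\pm-1}$. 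After passing to a finite cover I take $G=\prod_iG_i\times T^k$ with the $G_i$ simple and simply connected. Since $\dim G/H=2n-1$ is odd, $\chi(G/H)=0$, so Mayer--Vietoris gives $\chi(M)=\chi(G/K_-)+\chi(G/K_+)>0$; hence at least one singular orbit, say $G/K_-$, has $\rank K_-=\rank G$. If $H$ also had maximal rank then $\chi(G/H)>0$, impossible; since the rank of a subgroup is preserved passing to a sphere quotient exactly when the sphere is even dimensional, we conclude $S^{l_--1}$ is odd dimensional and $l_-$ even.

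The decisive step is to identify the factors $G_i$. I would go through the compact simple Lie groups and, using the classification of pairs $(K,H)$ with $K/H$ a sphere, eliminate every case but $G_i=SU(2)$. The engine is the interplay of three properties of $M$: $H^*(M)$ is generated in degree two, $H^{\mathrm{odd}}(M)=0$, and $p_1(M)=-\sum a_i^2$. A singular orbit $G/K_\pm$ of the type forced by a non-$SU(2)$ simple factor -- modelled on $\mathbb{CP}^m$ with $m\ge2$, a complex or quaternionic Grassmannian, a quadric, an exceptional flag manifold, and so on -- would, via the normal-bundle gluing in Mayer--Vietoris, produce in $M$ either a cohomology generator in degree above two or a Pontrjagin class that cannot be written as $-\sum a_i^2$; in fact the hypothesis drives all Pontrjagin classes of $M$ to zero, just as in Theorem~\ref{thm1}. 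So $G_i=SU(2)$ for every $i$; a rank and effectiveness argument bounds the torus factor by $k\le1$, and a last appeal to the three properties above rules out all remaining sphere pairs except $l_-=l_+=2$. This is the first assertion: $G$ has a finite covering group of the form $\prod SU(2)$ or $\prod SU(2)\times S^1$.

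Finally I would reconstruct $M$. With $l_-=l_+=2$ each piece $G\times_{K_\pm}D^2$ is a $2$-disk bundle over the homogeneous space $G/K_\pm$, and gluing the two along their boundary circle-bundles over $G/H$ exhibits $M$ as an $S^2$-bundle -- the fibrewise projectivisation of a rank-two complex bundle when the $S^1$-factor is present -- over a base $N$, where inspection of the slice representations identifies $G/K_-\cong G/K_+=:N$. Since $M$ is simply connected with $H^{\mathrm{odd}}(M)=0$, so is $N$, and being homogeneous under $\prod SU(2)$ (times the unused torus factor) it must be a product of two-spheres; hence $M$ is an $S^2$-bundle over $\prod S^2$. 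The step I expect to be the main obstacle is the identification of the simple factors: running the case analysis over all compact simple groups and all sphere pairs $(K,H)$ and, in each case, extracting a cohomology or Pontrjagin obstruction from the resulting orbit structure. The rest is bookkeeping with Mayer--Vietoris and slice representations.
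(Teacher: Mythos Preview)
Your plan diverges substantially from the paper's argument, and the part you yourself flag as ``the main obstacle'' is exactly where the proposal has a real gap.

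The paper never runs a case analysis over simple factors or sphere pairs. Instead, the Pontrjagin hypothesis is used once, in Lemma~\ref{sec:quas-manif-with-2}, to show that $M$ is of \emph{q-type} in Hauschild's sense: the relation $p_1(M)+\sum a_i^2=0$, rewritten in a Davis--Januszkiewicz basis of $H^2(M)$, becomes a positive definite quadratic form vanishing in $H^4(M)$. From then on everything follows from Hauschild's structure theorems \cite{0623.57024}. In particular his Theorem~5.9 says that for a q-type space the isotropy group at any $T$-fixed point of a semisimple action is a maximal torus; this pins down $K_\pm=T$ immediately, with no casework, and after conjugating $K_+$ to $K_-$ inside $Z_G(H)^0$ one recognises $M$ as an $S^2$-bundle over $G/T$. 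The simple factors are then identified by a pure dimension count: the Serre spectral sequence gives $\dim_\Q H^2(M)=\rank G'+1$, and Hauschild's bound $T_t(M)\le\edim H^*(M)=\dim_\Q H^2(M)$ together with quasitoricity $n\le T_t(M)$ and $\dim G'-\rank G'=2n-2$ forces $\dim G'\le 3\rank G'$, hence $G'=\prod SU(2)$.

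By contrast, the obstruction you propose---that a non-$SU(2)$ simple factor would force a cohomology generator above degree two or an inadmissible Pontrjagin class---is not evident. When $K_\pm$ is a maximal torus (which, as the paper shows, is precisely the case that occurs), $H^*(G/K_\pm)=H^*(G/T)$ is generated in degree two for \emph{every} compact connected $G$, so the degree argument cannot separate $SU(2)$ from $SU(3)$ or anything else; and your assertion that ``the hypothesis drives all Pontrjagin classes of $M$ to zero'' is unsupported and plays no role in the paper. The missing ingredient is exactly the q-type machinery and the inequality $T_t(M)\le\dim_\Q H^2(M)$, which replaces the entire case analysis by a two-line estimate.
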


The proofs of these theorems are based on Hauschild's study \cite{0623.57024} of spaces of q-type.
A space of \(q\)-type is defined to be a topological space \(X\) satisfying the following cohomological properties:
\begin{itemize}
\item The cohomology ring \(H^*(X)\) is generated as a \(\Q\)-algebra by elements of degree two, i.e. \(H^*(X)=\Q[x_1,\dots,x_n]/I_0\) and \(\deg x_i=2\).
\item The defining ideal \(I_0\) contains a definite quadratic form \(Q\).
\end{itemize}

The note is organised as follows.
In section~\ref{sec:qt_vanish} we show that a quasitoric manifold \(M\) with \(p_1(M)=-\sum a_i^2\) for some \(a_i\in H^2(M)\) is of \(q\)-type.
In section~\ref{sec:qt_hom} we prove Theorem \ref{thm1}.
In section~\ref{sec:cohom1} we recall some properties of cohomogeneity one manifolds.
In section~\ref{sec:quasi} we prove Theorem \ref{sec:thm2}.

The results presented in this note form part of the outcome of my Ph.D. thesis \cite{wiemeler_phd} written under the supervision of Prof. Anand Dessai at the University of Fribourg.
I would like to thank Anand Dessai for helpful discussions.

\section{Quasitoric manifolds with $p_1(M)=-\sum a_i^2$}
\label{sec:qt_vanish}
In this section we study quasitoric manifolds \(M\) with \(p_1(M)=-\sum a_i^2\) for some \(a_i\in H^2(M)\).
To do so we first introduce some notations from \cite{0623.57024} and \cite[Chapter VII]{0429.57011}.
For a topological space \(X\) we define the topological degree of symmetry of \(X\) as
\begin{equation*}
  N_t(X)=\max \{\dim G; G\text{ compact Lie-group, } G\text{ acts effectively on }X\}
\end{equation*}
Similarly one defines the semi-simple degree of symmetry of \(X\) as
\begin{equation*}
  N^{ss}_t(X)=\max \{\dim G; G\text{ compact semi-simple Lie-group, } G\text{ acts effectively on }X\}
\end{equation*}
and the torus-degree of symmetry as
\begin{equation*}
  T_t(X)=\max \{\dim T; T\text{ torus, } T\text{ acts effectively on }X\}.
\end{equation*}
In the above definitions we assume that all groups act continuously.

Another important invariant of a topological space \(X\) used in \cite{0623.57024} is the so called embedding dimension of its rational cohomology ring.
For a local \(\Q\)-algebra \(A\), we denote by \(\edim A\) the embedding dimension of \(A\).
By definition, we have \(\edim A=\dim_\Q \mathfrak{m}_A/\mathfrak{m}_A^2\), where \(\mathfrak{m}_A\) is the maximal ideal of \(A\).
In case that \(A=\bigoplus_{i\geq 0}A^i\) is a positively graded local \(\Q\)-algebra, \(\mathfrak{m}_A\) is the augmentation ideal \(A_+=\bigoplus_{i>0}A^i\).
If furthermore \(A\) is generated by its degree two part, then \(\mathfrak{m}_A^2=\bigoplus_{i>2}A^i\).
Therefore for a quasitoric manifold \(M\) over the polytope \(P\) we have \(\edim H^*(M)=\dim_\Q H^2(M)=m-n\) where \(m\) is the number of facets of \(P\) and \(n\) is its dimension.

\begin{lemma}
\label{sec:quas-manif-with-2}
  Let \(M\) be a quasitoric manifold with \(p_1(M)=-\sum a_i^2\) for some \(a_i\in H^2(M)\).
  Then \(M\) is a manifold of q-type.
\end{lemma}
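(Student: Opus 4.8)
The plan is to check the two defining conditions of a space of $q$-type directly, using the standard presentation of the cohomology ring of a quasitoric manifold and Davis and Januszkiewicz's computation of its (stable) tangent bundle. Let $P$ be the polytope of $M$, with facets $F_1,\dots,F_m$ and $\dim P=n$, and let $v_i\in H^2(M)$ denote the Poincar\'e dual of the characteristic submanifold lying over $F_i$. Then $H^*(M)=\Q[v_1,\dots,v_m]/I_0$, where $I_0$ is the sum of the Stanley--Reisner ideal of $P$ and the ideal generated by the linear forms coming from the characteristic function. Since the $v_i$ have degree two, this is already a presentation of $H^*(M)$ of the form required in the definition of $q$-type, so the first condition holds; in particular $H^2(M)$ is spanned by $v_1,\dots,v_m$.

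For the second condition I would use that $M$ admits a stably complex structure with $TM\oplus\R^{2(m-n)}\cong\bigoplus_{i=1}^m L_i$, where $L_i$ is the complex line bundle with $c_1(L_i)=v_i$. Since Pontrjagin classes are multiplicative in rational cohomology, this gives $p(M)=\prod_{i=1}^m(1+v_i^2)$, and in particular $p_1(M)=\sum_{i=1}^m v_i^2$. Using the hypothesis $p_1(M)=-\sum_j a_j^2$ together with a choice of linear forms $\ell_j$ with $a_j=\ell_j(v_1,\dots,v_m)$ (possible since the $v_i$ span $H^2(M)$), the polynomial
\[
  Q\;=\;\sum_{i=1}^m v_i^2\;+\;\sum_j\ell_j(v_1,\dots,v_m)^2\;\in\;\Q[v_1,\dots,v_m]
\]
maps to $0$ in $H^4(M)$, hence lies in $I_0$.

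Finally I would observe that $Q$ is a definite quadratic form in the variables $v_1,\dots,v_m$: it is the sum of the positive definite form $\sum_{i=1}^m v_i^2$ and the positive semidefinite form $\sum_j\ell_j^2$, and is therefore positive definite. Thus $I_0$ contains a definite quadratic form, which is the second condition in the definition of a space of $q$-type, and consequently $M$ is a manifold of $q$-type.

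I do not expect a genuine obstacle in this argument. The two points requiring care are: first, that one must work with the (in general non-minimal) presentation of $H^*(M)$ on all $m$ generators $v_i$, so that the quadratic part $\sum v_i^2$ arising from $p_1$ is actually definite and not merely semidefinite; and second, that one records the identity $p_1(M)=\sum_{i=1}^m v_i^2$ correctly from the Davis--Januszkiewicz description of the stable tangent bundle (the signs of the $v_i$, which depend on orientation conventions, are irrelevant here, since only the squares $v_i^2$ occur).
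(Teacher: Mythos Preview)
Your argument is correct and follows essentially the same strategy as the paper: express $p_1(M)$ as a sum of squares of degree-two generators via the Davis--Januszkiewicz description, add the hypothesised $\sum a_j^2$, and observe that the resulting positive definite quadratic form lies in the defining ideal. The only difference is that the paper works with a \emph{minimal} presentation on a basis $u_{n+1},\dots,u_m$ of $H^2(M)$ (so that $p_1(M)=\sum_{i=n+1}^m u_i^2+\sum_{j=1}^n(\sum_i\lambda_{i,j}u_i)^2$), whereas you use the full presentation on all $m$ classes $v_1,\dots,v_m$; either choice yields a definite quadratic form in the ideal, and your version is arguably a bit more direct since $\sum v_i^2$ is visibly definite without invoking the coefficients $\lambda_{i,j}$.
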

\begin{proof}
  The discussion at the beginning of section 3 of \cite{semifree}  together with Corollary 6.8 of \cite[p. 448]{davis91:_convex_coxet} shows that there are
  a basis \(u_{n+1},\dots,u_{m}\) of \(H^2(M)\) and
  \(\lambda_{i,j}\in \mathbb{Z}\) such that
  \begin{equation*}
    p_1(M)=\sum_{i=n+1}^{m}u_i^2 +\sum_{j=1}^n\left(\sum_{i=n+1}^{m}\lambda_{i,j}u_i\right)^2.
  \end{equation*}
  Therefore
  \begin{align*}
  	0&= \sum_{i=n+1}^{m}u_i^2 +\sum_{j=1}^n\left(\sum_{i=n+1}^{m}\lambda_{i,j}u_i\right)^2 +\sum_i a_i^2\\
  	&=\sum_{i=n+1}^{m}u_i^2 +\sum_{j=1}^n\left(\sum_{i=n+1}^{m}\lambda_{i,j}u_i\right)^2 + \sum_j\left(\sum_{i=n+1}^{m}\mu_{i,j}u_i\right)^2 
  \end{align*}
 with some \(\mu_{i,j} \in\Q\) follows.
  
  Because 
  \begin{equation*}
    \sum_{i=n+1}^{m}X_i^2 +\sum_{j=1}^n\left(\sum_{i=n+1}^{m}\lambda_{i,j}X_i\right)^2+\sum_j\left(\sum_{i=n+1}^{m}\mu_{i,j}X_i\right)^2 
  \end{equation*}
  is a positive definite bilinear form the statement follows.
\end{proof}

\begin{prop}
\label{sec:gg}
  Let \(M\) be a quasitoric manifold of q-type over the \(n\)-dimensional polytope \(P\).
  Then we have for the number \(m\) of facets of \(P\):
  \begin{equation*}
    m\geq 2n
  \end{equation*}
\end{prop}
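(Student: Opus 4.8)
The plan is to play the torus action supplied by the quasitoric structure against an upper bound on the torus degree of symmetry of a space of $q$-type. First I would record the two ingredients. On the one hand, by the very definition of a quasitoric manifold the $n$-dimensional torus acts effectively on $M$, so $T_t(M)\ge n$. On the other hand, by Lemma~\ref{sec:quas-manif-with-2} the manifold $M$ is of $q$-type; moreover $M$ is closed and simply connected, so the hypotheses under which Hauschild studies spaces of $q$-type are satisfied.

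The second step is to invoke from \cite{0623.57024} (compare also \cite[Chapter VII]{0429.57011}) the estimate $T_t(X)\le\edim H^*(X)$ valid for every space $X$ of $q$-type, and to feed in the identity $\edim H^*(M)=\dim_\Q H^2(M)=m-n$ recorded just before Lemma~\ref{sec:quas-manif-with-2}. Putting the pieces together yields
\begin{equation*}
  n\le T_t(M)\le\edim H^*(M)=m-n,
\end{equation*}
which is precisely the asserted inequality $m\ge 2n$. The bound is sharp: for $M=\prod S^2$ the polytope $P$ is an $n$-cube with $m=2n$ facets.

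The delicate point is the bookkeeping around the quoted estimate. One must make sure that the inequality is used in the form bounding the \emph{torus} degree of symmetry $T_t$ --- not the full degree of symmetry $N_t$, for which the analogue would involve $\edim H^*(M)$ in a more complicated way, as $\prod S^2$ with its $SO(3)^n$-action already shows --- and that the hypotheses of Hauschild's theorem being used are the ones a quasitoric manifold actually satisfies. Once the correct statement has been isolated no further computation is needed; note that the $q$-type hypothesis enters only through this single step, and that it is genuinely necessary, since for $\C P^n$ one has $m=n+1<2n$ when $n\ge 2$ although $T_t(\C P^n)\ge n$.
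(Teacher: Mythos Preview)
Your argument is correct and is essentially the paper's own proof: both combine the lower bound \(n\le T_t(M)\) coming from the quasitoric torus action with Hauschild's estimate \(T_t(M)\le\edim H^*(M)=m-n\) to obtain \(m\ge 2n\). One small redundancy: the proposition already \emph{assumes} that \(M\) is of \(q\)-type, so there is no need to invoke Lemma~\ref{sec:quas-manif-with-2} (whose hypothesis on \(p_1(M)\) is not part of the present statement anyway).
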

\begin{proof}
  By Theorem 3.2 of \cite[p. 563]{0623.57024}, we have
  \begin{equation*}
    n \leq T_t(M) \leq \edim H^*(M) =m-n.
  \end{equation*}
  Therefore we have \(2n\leq m\).
\end{proof}

\begin{remark}
  The inequality in the above proposition is sharp, because for \(M=S^2\times\dots\times S^2\) we have \(m=2n\) and \(p_1(M)=0\).
\end{remark}

By Theorem 5.13 of \cite[p. 573]{0623.57024}, we have for a manifold \(M\) of q-type that \(N_t^{ss}\leq \dim M + \edim M\). Hence, for a quasitoric manifold \(M\), we get: 
\begin{prop}
\label{sec:quasitoric-manif}
  Let \(M\) as in Proposition~\ref{sec:gg}. Then we have
  \begin{equation*}
    N_t^{ss}(M)\leq 2n+m-n=n+m.
  \end{equation*}
\end{prop}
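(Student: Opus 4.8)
The plan is to apply Hauschild's bound on the semi-simple degree of symmetry directly. By Theorem 5.13 of \cite{0623.57024}, any manifold \(M\) of \(q\)-type satisfies
\[
  N_t^{ss}(M) \leq \dim M + \edim H^*(M).
\]
Since \(M\) is, by the hypotheses inherited from Proposition~\ref{sec:gg}, a quasitoric manifold of \(q\)-type over the \(n\)-dimensional polytope \(P\), both quantities on the right-hand side are already pinned down. A quasitoric manifold over an \(n\)-dimensional polytope has real dimension \(2n\), and, as recorded in the discussion preceding Lemma~\ref{sec:quas-manif-with-2}, the embedding dimension of its rational cohomology ring equals \(\dim_\Q H^2(M) = m-n\), where \(m\) denotes the number of facets of \(P\).

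Substituting these two values into Hauschild's inequality yields
\[
  N_t^{ss}(M) \leq 2n + (m-n) = n+m,
\]
which is exactly the claimed bound. Thus the proof is a two-line computation once Theorem 5.13 of \cite{0623.57024} is in hand.

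There is, honestly, no real obstacle to surmount here: the entire mathematical content sits in the cited Theorem 5.13, and what remains is bookkeeping of the dimension (\(2n\)) and the embedding dimension (\(m-n\)) of a quasitoric manifold. The only points one should pause on are that the hypothesis ``\(M\) is of \(q\)-type'' is precisely what licenses the appeal to Hauschild's theorem, and that the identity \(\edim H^*(M) = m-n\) relies on \(H^*(M)\) being generated in degree two, so that \(\mathfrak{m}^2 = \bigoplus_{i>2} H^i(M)\); both hold for quasitoric manifolds by Corollary 6.8 of \cite{davis91:_convex_coxet}.
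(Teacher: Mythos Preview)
Your proof is correct and is essentially identical to the paper's own argument: the paper states just before the proposition that Theorem~5.13 of \cite[p.~573]{0623.57024} gives \(N_t^{ss}(M)\leq \dim M + \edim H^*(M)\) for manifolds of \(q\)-type, and then substitutes \(\dim M=2n\) and \(\edim H^*(M)=m-n\). There is nothing to add.
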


\begin{remark}
  The inequality in the above proposition is sharp because for \(M= S^2\times \dots\times S^2\) we have \(m=2n\) and \(SU(2)\times\dots\times SU(2)\) acts on \(M\) and has dimension \(3n\).
\end{remark}

\section{Quasitoric manifolds which are also homogeneous spaces}
\label{sec:qt_hom}

In this section we prove Theorem~\ref{thm1}.
Recall from Lemma~\ref{sec:quas-manif-with-2} that a quasitoric manifold \(M\) with  first Pontrjagin-class equal to the negative of the sum of squares of elements of \(H^2(M)\) is a manifold of q-type.

Let \(M\) be a quasitoric manifold over the polytope \(P\) which is also a homogeneous space and is of q-type.

Let \(G\) be a compact connected Lie-group and \(H\subset G\) a closed subgroup such that \(M\) is homeomorphic or diffeomorphic to \(G/H\).
Because \(\chi(M)>0\) and \(M\) is simply connected, we have \(\rank G=\rank H\) and \(H\) is connected.
Therefore we may assume that \(G\) is semi-simple and simply connected.

Let \(T\) be a maximal torus of \(G\).
Then \(\left(G/H\right)^T\) is non-empty.
By Theorem 5.9 of \cite[p. 572]{0623.57024}, the isotropy group \(G_x\) of a point \(x\in\left(G/H\right)^T\) is a maximal torus of \(G\). Hence, \(H\) is a maximal torus of \(G\).

Now it follows from Theorem 3.3 of \cite[p. 563]{0623.57024} that
\begin{equation*}
  T_t(G/H) =\rank G.
\end{equation*}

Because \(M\) is quasitoric, we have \(n\leq T_t(G/H)\).
Combining these inequations, we get
\begin{equation*}
  \dim G - \dim H = \dim M = 2n \leq 2 \rank G.
\end{equation*}
This equation implies that \(\dim G\leq 3\rank G\).

For a simple simply connected Lie-group \(G'\) we have \(\dim G'\geq 3 \rank G'\) and \(\dim G'=3\rank G'\) if and only if \(G'=SU(2)\).
Therefore we have \(G=\prod SU(2)\) and \(M= \prod SU(2)/T^1=\prod S^2\).
This proves Theorem~\ref{thm1}.

\section{Cohomogeneity one manifolds}
\label{sec:cohom1}
Here we discuss some facts about closed cohomogeneity one Riemannian \(G\)-manifolds \(M\) with orbit space a compact interval \([-1,1]\).
We follow \cite[p. 39-44]{1145.53023} in this discussion.

We fix a normal geodesic \(c:[-1,1]\rightarrow M\) perpendicular to all orbits.
We denote by \(H\) the principal isotropy group \(G_{c(0)}\), which is equal to the isotropy group \(G_{c(t)}\) for \(t\in ]-1,1[\), and by \(K^{\pm}\) the isotropy groups of \(c(\pm 1)\).

Then \(M\) is the union of tabular neighbourhoods of the non-principal orbits \(Gc(\pm 1)\) glued along their boundary, i.e., by the slice theorem we have
\begin{equation}
\label{eq:7}
  M= G\times_{K^-}D_-\cup G\times_{K^+}D_+,
\end{equation}
where \(D_{\pm}\) are discs.
  Furthermore \(K^{\pm}/H=\partial D_{\pm}=S_{\pm}\) are spheres.

Note that \(M\) may be reconstructed from the following diagram of groups.
\begin{equation*}
  \xymatrix{
    &G&\\
    K^{-} \ar[ur]&& K^{+}\ar[ul]\\
    &H \ar[ur] \ar[ul]&\\
  }
\end{equation*}
The construction of such a group diagram from a cohologeneity one manifold may be reversed.
Namely, if such a group diagram with \(K^{\pm}/H=S_{\pm}\) spheres is given, then one may construct a cohomogeneity one \(G\)-manifold from it.
We also write these diagrams as \(H\subset K^{-},K^+\subset G\).

Now we give a criterion for two group diagrams yielding up to \(G\)-equivariant diffeomorphism the same manifold \(M\).

\begin{lemma}[{\cite[p. 44]{1145.53023}}]
\label{sec:cohom-one-manif}
  The group diagrams \(H\subset K^{-},K_1^+\subset G\) and  \(H\subset K^{-},K_2^+\subset G\) yield the same cohomogeneity one manifold up to equivariant diffeomorphism if there is an \(a\in N_G(H)^0\) with \(K_1^+=aK_2^+a^{-1}\).
\end{lemma}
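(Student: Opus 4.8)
The plan is to compare the two manifolds $M_1$ and $M_2$ obtained from the group diagrams $H\subset K^-,K_1^+\subset G$ and $H\subset K^-,K_2^+\subset G$ via the gluing construction \eqref{eq:7}, namely $M_i=G\times_{K^-}D_-\cup G\times_{K_i^+}D_+$, where the two halves are identified along their common boundary, which in both cases is $G$-equivariantly diffeomorphic to $G/H$, by a $G$-equivariant gluing diffeomorphism $f_i$. Since the left half $G\times_{K^-}D_-$ and, with the standard normalisation, the gluing datum do not involve $K^+$, everything reduces to producing a $G$-equivariant diffeomorphism of the right halves and keeping track of what it does on the boundary, where one uses the standard identification $\mathrm{Diff}_G(G/H)\cong N_G(H)/H$.

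First I would transport the $K_2^+$-action. Writing $c_a\colon G\to G$, $g\mapsto aga^{-1}$, for the inner automorphism, we have $c_a(K_2^+)=K_1^+$ and, because $a\in N_G(H)$, also $c_a(H)=H$; hence, equipping the disc $D_+$ with the $K_1^+$-action $k\cdot x:=(a^{-1}ka)x$ turns its boundary sphere $K_2^+/H$ into $K_1^+/H$, as required by the diagram of $M_1$. One then checks that $\phi\colon G\times_{K_2^+}D_+\to G\times_{K_1^+}D_+$, $[g,x]\mapsto[ga^{-1},x]$, is well defined, $G$-equivariant and a diffeomorphism, with inverse $[g,x]\mapsto[ga,x]$. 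A short computation with the orbit-map identifications of the two boundary spheres with $G/H$ shows that $\phi$ restricts on the boundary to the right translation $gH\mapsto ga^{-1}H$, which is well defined and $G$-equivariant precisely because $a\in N_G(H)$.

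Gluing $G\times_{K^-}D_-$ to $G\times_{K_1^+}D_+$ along $f_1$ and then applying $(\mathrm{id},\phi^{-1})$ exhibits $M_1$ as $G\times_{K^-}D_-$ glued to $G\times_{K_2^+}D_+$ along the composite $f_1\circ(gH\mapsto ga^{-1}H)$. Now the connectedness hypothesis enters: choosing a path from $e$ to $a$ inside $N_G(H)^0$ produces an isotopy through $G$-equivariant diffeomorphisms of $G/H$ from the identity to the right translation by $a^{-1}$, so $f_1\circ(gH\mapsto ga^{-1}H)$ is $G$-equivariantly isotopic to $f_1$. Since gluing two fixed $G$-manifolds with boundary along $G$-equivariantly isotopic boundary diffeomorphisms yields $G$-equivariantly diffeomorphic manifolds (by a standard equivariant collar/isotopy-extension argument), we conclude $M_1\cong G\times_{K^-}D_-\cup_{f_1}G\times_{K_2^+}D_+=M_2$.

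The main obstacle is the bookkeeping in the middle step: matching up the two orbit-map identifications of the common boundary with $G/H$ and the transported $K_1^+$-action carefully enough to see that $\phi|_{\partial}$ is exactly right translation by an element of $N_G(H)/H$. The role of the hypothesis then becomes transparent: it is only the connectedness of $N_G(H)$, used in the last step, that makes the boundary twist isotopically trivial, and for $a$ lying in another component of $N_G(H)$ this move may genuinely change the resulting cohomogeneity one manifold.
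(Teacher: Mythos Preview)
The paper does not supply its own proof of this lemma; it is simply quoted from the cited source \cite[p.~44]{1145.53023}. Your argument is the standard one and is correct: you build the equivariant diffeomorphism of the right-hand disc bundles via \([g,x]\mapsto[ga^{-1},x]\), identify its boundary restriction with right translation by \(a^{-1}\in N_G(H)/H\), and then use a path in \(N_G(H)^0\) to isotope this boundary diffeomorphism to the identity, finishing with an equivariant collar argument. This is precisely how the proof goes in the reference the paper cites, so there is nothing to contrast.
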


\section{Quasitoric manifolds with cohomogeneity one actions}
\label{sec:quasi}
In this section we study quasitoric manifolds \(M\) which admit a smooth action of a compact connected Lie-group \(G\) which has an orbit of codimension one.
As before we do not assume that the \(G\)-action on \(M\) extends the torus action.
We have the following lemma:

\begin{lemma}
\label{sec:quas-manif-with}
  Let \(M\) be a quasitoric manifold of dimension \(2n\) which is of q-type.
Assume that the compact connected Lie-group \(G\) acts almost effectively and smoothly on \(M\) such that \(\dim M/G =1\).
Then we have:
\begin{enumerate}
\item\label{item:kohom1} The singular orbits are given by \(G/T\) where \(T\) is a maximal torus of \(G\).
\item\label{item:kohom2} The Euler-characteristic of \(M\) is \(2\# W(G)\).
\item\label{item:kohom3} The principal orbit type is given by \(G/S\), where \(S\subset T\) is a subgroup of codimension one.
\item\label{item:kohom4} The center \(Z\) of \(G\) has dimension at most one.
\item\label{item:kohom5} \(\dim G/T = 2n-2\).
\end{enumerate}
\end{lemma}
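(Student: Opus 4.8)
The plan is to read off the structure of $M$ from the cohomogeneity-one dictionary of Section~\ref{sec:cohom1} together with Hauschild's fixed-point theorem. Since $M$ is quasitoric it is simply connected with $H^{\mathrm{odd}}(M)=0$, so $\chi(M)>0$; hence the orbit space is an interval and $M=G\times_{K^-}D_-\cup G\times_{K^+}D_+$, with singular orbits $G/K^{\pm}$, principal orbit $G/H$, and spheres $S_{\pm}=K^{\pm}/H$. Let $T$ be a maximal torus of $G$. As $\chi(M^T)=\chi(M)\neq0$, we have $M^T\neq\emptyset$, and Theorem~5.9 of \cite[p.~572]{0623.57024} gives $G_x=T$ for every $x\in M^T$. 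Now $M^T$ meets at least one tube, say $G\times_{K^-}D_-$; there a $T$-fixed point $[g,v]$ has isotropy $g(K^-)_vg^{-1}$, and since $K^-$ acts on the cone $D_-$ over $K^-/H$ with isotropy $K^-$ at the cone point and (conjugates of) $H$ elsewhere, $(K^-)_v$ must be a maximal torus of $G$; it cannot be conjugate to $H$, since that would make $H$ a maximal torus and force $\dim G/H=\dim G-\rank G$ to be even, contrary to $\dim G/H=2n-1$. Hence $v=0$ and $K^-=T$. Then $H\subset T$, and $S_-=T/H$ — a sphere that is also a quotient of a torus — equals $S^1$, so $H$ has codimension one in $T$, proving part~\ref{item:kohom3}; counting dimensions gives $\dim G=2n+\rank G-2$, hence $\dim G/T=2n-2$ (part~\ref{item:kohom5}).

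For part~\ref{item:kohom4}: with $Z=Z(G)\subset T$, if $Z^0\subseteq H$ then $Z^0$ acts trivially on $G/H$ and on $G/T$, hence on all of $M$, contradicting almost effectiveness; so $Z^0$ surjects onto $T/H=S^1$ and $Z^0\cap H$ is finite. Applying the same reasoning to $Z^0\cap H$ (central, contained in $H$) shows $\dim(Z^0\cap H)=0$, so $\dim Z\le1$.

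For parts~\ref{item:kohom1} and~\ref{item:kohom2} it remains to show that $K^+$ is also a maximal torus. Since $\rank H=\rank G-1$, either $\rank K^+=\rank G$ or $\rank K^+=\rank H$. In the former case $M^T$ meets $G\times_{K^+}D_+$ as well (via $[g,0]$ with $g^{-1}Tg\subseteq K^+$), and the argument above makes $K^+$ a maximal torus with $S_+=S^1$. In the latter case $\chi(S_+)>0$, so $S_+=S^{2a}$; the exceptional case $a=0$ is impossible, since then $G/H\to G/K^+$ is a connected double cover and, as $\pi_1(G/T)=0$, van Kampen's theorem gives $\pi_1(M)\cong\mathbb Z/2$, contradicting simple connectivity. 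When $a\ge1$ one uses that $H^{\mathrm{odd}}(M)=0$: the Mayer–Vietoris sequence of $M=(G\times_TD^2)\cup(G\times_{K^+}D_+)$ forces the pullback $H^{\mathrm{odd}}(G/K^+)\to H^{\mathrm{odd}}(G/H)$ along the $S^{2a}$-bundle $G/H\to G/K^+$ to be injective, so its Euler class vanishes and $H^*(G/H)\cong H^*(G/K^+)\otimes H^*(S^{2a})$. Feeding this and $H^{\mathrm{odd}}(G/T)=0$ into Mayer–Vietoris in degrees $1$ and $2$ gives $b_2(M)\le b_2(G/T)=\rank G'$, where $G'$ is the semisimple part of $G$; but $\rank G'\le n-1$ (from $\dim G'\ge 3\rank G'$ and $\dim G=2n+\rank G-2$), whereas $b_2(M)=\edim H^*(M)=m-n\ge n$ by Proposition~\ref{sec:gg} — a contradiction. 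Thus $K^+$ is a maximal torus and $S_+=S^1$, giving part~\ref{item:kohom1}, and then $\chi(M)=\chi(G/K^-)+\chi(G/K^+)-\chi(G/H)=2\#W(G)$, since $G/H\to G/T$ is an $S^1$-bundle and $\chi(G/T)=\#W(G)$, which is part~\ref{item:kohom2}.

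The main obstacle is the case $a\ge1$ above: everything else is a routine application of the slice theorem and Hauschild's theorem, but excluding a second singular orbit with even-dimensional normal sphere genuinely uses both halves of the $q$-type hypothesis — $H^{\mathrm{odd}}(M)=0$, which kills the Euler class, and the generation of $H^*(M)$ in degree two, which forces $b_2(M)$ to equal the embedding dimension $m-n\ge n$. A minor point to verify along the way is the rational orientability of the sphere bundles involved, which follows from the orientability of $M$.
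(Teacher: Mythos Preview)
Your argument is correct, but the route you take to exclude the ``one-sided'' case differs from the paper's. After identifying $K^-$ with a maximal torus via Hauschild's Theorem~5.9 (exactly as in the paper), you split on $\rank K^+$ and, when $\rank K^+=\rank H$, run a Mayer--Vietoris/Gysin computation: $H^{\mathrm{odd}}(M)=0$ forces $\pi^*\colon H^{\mathrm{odd}}(G/K^+)\to H^{\mathrm{odd}}(G/H)$ to be injective, hence the Euler class of $G/H\to G/K^+$ vanishes, and then the projection $H^2(M)\hookrightarrow H^2(G/T)\oplus H^2(G/K^+)\to H^2(G/T)$ is injective (using that $\pi^*$ is injective in degree~2), giving $b_2(M)\le\rank G'\le n-1$ and contradicting Proposition~\ref{sec:gg}. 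The paper instead never looks at $K^+$ directly: it observes that if only one singular orbit meets $M^T$ then $\chi(M)=\chi(G/T)=\#W(G')$, and invokes Hauschild's Theorem~5.11 (a rigidity statement for $q$-type manifolds attaining this Euler-characteristic bound) to conclude $M\cong G'/T'$, contradicting $\dim M/G=1$. The paper's approach is a one-line citation and avoids all the bundle bookkeeping and connectivity/orientability checks you flag at the end; your approach, on the other hand, is more self-contained (it does not rely on the somewhat deeper Theorem~5.11) and makes explicit exactly where both halves of the $q$-type hypothesis enter. The remaining parts (\ref{item:kohom3})--(\ref{item:kohom5}) are handled the same way in both proofs.
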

\begin{proof}
  At first note that \(M/G\) is an interval \([-1,1]\) and not a circle because \(M\) is simply connected.
  We start with proving (\ref{item:kohom1}).
  Let \(T\) be a maximal torus of \(G\).
  By passing to a finite covering group of \(G\) we may assume \(G=G'\times Z'\) with \(G'\) a compact connected semi-simple Lie-group and \(Z'\) a torus.
  Let \(x\in M^T\).
  Then the isotropy group \(G_x\) has maximal rank in \(G\).
  Therefore \(G_x\) splits as \(G_x'\times Z'\).
  
  By Theorem 5.9 of \cite[p. 572]{0623.57024}, \(G_x'\) is a maximal torus of \(G'\).
  Therefore we have \(G_x=T\).

  Because \(\dim G - \dim T\) is even, \(x\) is contained in a singular orbit.
  In particular we have
  \begin{equation}
    \label{eq:hom1}
    \chi(M)=\chi(M^T)=\chi(G/K^+)+\chi(G/K^-),
  \end{equation}
  where \(G/K^\pm\) are the singular orbits.
  Furthermore we may assume that \(G/K^+\) contains a \(T\)-fixed point.
  This implies
  \begin{equation}
    \label{eq:hom2}
    \chi(G/K^+)=\chi(G/T)=\# W(G) =\# W(G').
  \end{equation}

  Now assume that all \(T\)-fixed points are contained in the singular orbit \(G/K^+\).
  Then we have \((G/K^-)^T=\emptyset\).
  This implies
  \begin{equation*}
    \chi(M)=\chi(G/K^+)=\# W(G').
  \end{equation*}
  Now Theorem 5.11 of \cite[p. 573]{0623.57024} implies that \(M\) is the homogeneous space \(G'/G'\cap T =G/T\).
  This contradicts our assumption that \(\dim M/G=1\).

  Therefore both singular orbits contain \(T\)-fixed points.
  This implies that they are of type \(G/T\).
  This proves (\ref{item:kohom1}).
  (\ref{item:kohom2}) follows from (\ref{eq:hom1}) and (\ref{eq:hom2}).

  Now we prove (\ref{item:kohom3}) and (\ref{item:kohom5}).
  Let \(S\subset T\) be a minimal isotropy group.
  Then \(T/S\) is a sphere of dimension \(\codim (G/T,M)-1\).
  Therefore \(S\) is a subgroup of codimension one in \(T\) and \(\codim (G/T,M)=2\).

  If the center of \(G\) has dimension greater than one, then \(\dim Z'\cap S \geq 1\).
  That means that the action is not almost effective. 
  Therefore (\ref{item:kohom4}) holds.
\end{proof}

By Lemma~\ref{sec:quas-manif-with}, we have with the notation of the previous section that \(K^{\pm}\) are maximal tori of \(G\) containing \(H=S\).
In the following we will write \(G=G'\times Z'\) with \(G'\) a compact connected semi-simple Lie-group and \(Z'\) a torus.

Because  \(K^{\pm}\) are maximal tori of the identity component \(Z_G(S)^0\) of the centraliser of \(S\), there is some \(a\in Z_G(S)^0\) such that \(K^-=aK^+a^{-1}\).
By Lemma~\ref{sec:cohom-one-manif}, we may assume that \(K^+=K^-=T\).
Now from Theorem 4.1 of \cite[p. 198]{0599.57016} it follows that \(M\) is
a fiber bundle over \(G/T\) with fiber the cohomogeneity one manifold with group diagram \(S\subset T,T\subset T\).
Therefore it is a \(S^2\)-bundle over \(G/T\).

\begin{lemma}
\label{sec:quas-manif-with-1}
  Let \(M\) and \(G\) as in the previous lemma. Then we have
  \begin{equation*}
    T_t(M)\leq \rank G' +1.
  \end{equation*}
\end{lemma}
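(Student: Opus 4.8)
The plan is to bound $T_t(M)$ by combining the fibre-bundle description of $M$ established just above the statement with the bound on $T_t$ of a homogeneous space coming from Hauschild's theory. Recall that $M$ is an $S^2$-bundle $M \to G/T$, where $G = G' \times Z'$ with $G'$ semi-simple and $Z'$ a torus of dimension at most one (Lemma~\ref{sec:quas-manif-with}(\ref{item:kohom4})). First I would note that any torus $T'$ acting effectively on $M$ has dimension at most $T_t(M) \le \edim H^*(M)$ if $M$ is of $q$-type, but this gives $m-n$, which is too weak; the sharper route is to exploit the structure of $M$ directly. The key point is that $H^*(M)$ is, as a ring, a free module of rank two over $H^*(G/T)$ (the Leray–Hirsch/projectivisation description of an $S^2$-bundle), so the $q$-type invariants of $M$ are controlled by those of $G/T$ together with one extra generator coming from the fibre.

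Concretely, the plan is: (1) compute $\edim H^*(M)$ in terms of $G/T$. Since $M$ is an $S^2$-bundle over $G/T$, we have $H^2(M) \cong H^2(G/T) \oplus \Q$, where the extra summand is generated by the Euler/first Chern class of the associated complex line bundle (or, more precisely, the tautological class on the sphere bundle). Now $H^2(G/T)$ has dimension equal to $\rank G'$ (the rank of $H^2$ of a full flag manifold $G'/T'$ equals $\operatorname{rank} G'$, and the $Z'$-factor contributes nothing since $G/T = G'/T'$). Hence $\edim H^*(M) = \dim_\Q H^2(M) = \rank G' + 1$. (2) Invoke the inequality $T_t(X) \le \edim H^*(X)$ from Theorem~3.2 of \cite{0623.57024} (already used in Proposition~\ref{sec:gg}), applied to $X = M$, which is of $q$-type. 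This immediately yields $T_t(M) \le \rank G' + 1$.

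The main obstacle I expect is the careful bookkeeping in step (1): one must be sure that $\dim_\Q H^2(M) = \rank G' + 1$ and not, say, $\rank G' + 2$ or something depending on $\dim Z'$. The fibre of the bundle is the cohomogeneity-one manifold with diagram $S \subset T, T \subset T$, which is $S^2$ with the obvious $T/S \cong S^1$ action; its $H^2$ is one-dimensional. For the base, $G/T = (G'\times Z')/(T'\times Z') = G'/T'$ is the complete flag manifold of $G'$, whose second Betti number is exactly $\operatorname{rank} G'$. The Serre spectral sequence of the $S^2$-bundle degenerates (the base is simply connected and has cohomology only in even degrees, and $M$ is quasitoric hence also has cohomology only in even degrees), so $H^2(M) \cong H^2(G'/T') \oplus H^0(G'/T') \otimes H^2(S^2)$, of dimension $\rank G' + 1$. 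Combining with $T_t(M) \le \edim H^^*(M)$ finishes the proof; one should also double-check that $M$ being an $S^2$-bundle over a simply connected space with even cohomology is again of $q$-type, but this is already known from Lemma~\ref{sec:quas-manif-with-2} since $M$ was assumed quasitoric of $q$-type at the outset.
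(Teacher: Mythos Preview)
Your proposal is correct and follows essentially the same route as the paper: compute $\dim_\Q H^2(M)=\rank G'+1$ via the (degenerating) Serre spectral sequence of the $S^2$-bundle $M\to G/T=G'/T'$, then invoke Hauschild's inequality $T_t(M)\le \edim H^*(M)=\dim_\Q H^2(M)$. The only cosmetic difference is that the paper cites the Borel description $H^*(G/T)\cong H^*(BT)/I$ to justify $\dim_\Q H^2(G/T)=\rank G'$, whereas you simply quote the second Betti number of a full flag manifold.
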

\begin{proof}
  At first we recall the rational cohomology of \(G/T\).
  By \cite[p. 67]{0158.20503}, we have
  \begin{equation*}
    H^*(G/T)\cong H^*(BT)/I
  \end{equation*}
  where \(I\) is the ideal generated by the elements of positive degree which are invariant under the action of the Weyl-group of \(G\).
  Therefore it follows that
  \begin{align*}
    \dim_\Q H^\text{odd}(G/T)&=0& \text{and}& & \dim_\Q H^2(G/T)&=\rank G'.
  \end{align*}
Therefore the Serre spectral sequence for the fibration \(S^2 \rightarrow M \rightarrow G/T\) degenerates.
Hence, we have
\begin{equation*}
H^*(M)=H^*(G/T)\otimes H^*(S^2)
\end{equation*}
as \(H^*(G/T)\)-modules.
In particular, we have
\begin{equation*}
\dim_\Q H^2(M)= \dim_\Q H^2(G/T) + \dim_\Q H^2(S^2) = \rank G' + 1.
\end{equation*}
Therefore
\begin{equation*}
T_t(M)\leq \edim H^*(M) = \dim_\Q H^2(M)= \rank G' + 1
\end{equation*}
follows.
\end{proof}

\begin{theorem}
\label{sec:quas-manif-with-3}
  Let \(M\) and \(G\) as in the previous  lemmas.
  Then \(G\) has a finite covering group of the form \(\prod SU(2)\) or \(\prod SU(2)\times S^1\).
  Furthermore \(M\) is diffeomorphic to a \(S^2\)-bundle over a product of two-spheres.
\end{theorem}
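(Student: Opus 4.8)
The plan is to assemble the conclusion from the structural results already established: $M$ is an $S^2$-bundle over $G/T$ (from the discussion preceding Lemma~\ref{sec:quas-manif-with-1}), the torus degree of symmetry satisfies $T_t(M)\le\rank G'+1$ (Lemma~\ref{sec:quas-manif-with-1}), and $M$ carries an effective $T^n$-action because it is quasitoric, so that $n\le T_t(M)$.

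First I would extract a dimension inequality. By item~(\ref{item:kohom5}) of Lemma~\ref{sec:quas-manif-with} we have $\dim G/T=2n-2$; writing $G=G'\times Z'$ with $G'$ semi-simple and $Z'$ a torus gives $\dim G/T=\dim G-\rank G=\dim G'-\rank G'$, hence $\dim G'-\rank G'=2n-2$. Combining $n\le T_t(M)\le\rank G'+1$ with this identity to eliminate $n$ yields $\dim G'\le 3\rank G'$.

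Next I would rerun the Lie-theoretic step from the proof of Theorem~\ref{thm1}: every compact simple simply connected group satisfies $\dim\ge 3\rank$, with equality only for $SU(2)$, so summing over the simple factors of the universal cover of $G'$ forces equality throughout and shows every factor is $SU(2)$. Thus $G$ has a finite covering group of the form $\prod SU(2)\times Z'$, and by item~(\ref{item:kohom4}) of Lemma~\ref{sec:quas-manif-with} the torus $Z'$ has dimension at most one, i.e.\ $Z'$ is trivial or $S^1$. This is the first assertion.

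Finally, for the bundle statement I would use that $M$ is already known to be an $S^2$-bundle over $G/T$ and that the full flag manifold $G/T$ depends only on the Lie algebra of $G$. Since $G$ has the same Lie algebra as $\prod SU(2)$ or $\prod SU(2)\times S^1$, the abelian factor contributes nothing and $G/T\cong\prod(SU(2)/T^1)=\prod S^2$, so $M$ is an $S^2$-bundle over a product of two-spheres. The only point needing a line of care is this last identification of $G/T$ (equivalently, checking that the preimage of $T$ in the finite cover $\prod SU(2)\times Z'$ is the standard maximal torus, which follows since the kernel of the covering is a finite subgroup of the center and hence lies in that torus); the rest is bookkeeping with the displayed inequalities, so I expect no serious obstacle here — the real work was done in the preceding lemmas.
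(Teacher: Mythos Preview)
Your proposal is correct and follows essentially the same route as the paper: combine $n\le T_t(M)\le\rank G'+1$ with $\dim G'-\rank G'=2n-2$ to get $\dim G'\le 3\rank G'$, invoke the $\dim\ge 3\rank$ fact for simple groups to force $G'$ to be covered by $\prod SU(2)$, use item~(\ref{item:kohom4}) for the $Z'$ factor, and conclude $G/T\cong\prod S^2$. Your added remarks on identifying $G/T$ via the Lie algebra are a harmless elaboration of what the paper states in one line.
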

\begin{proof}
  Because \(M\) is quasitoric we have \(n\leq T_t(M)\).
  By Lemma~\ref{sec:quas-manif-with} we have
  \begin{equation*}
    \dim G'-\rank G'=\dim G/T =2n-2.
  \end{equation*}
  Now Lemma~\ref{sec:quas-manif-with-1} implies
  \begin{equation*}
    \dim G'= 2n-2 + \rank G'\leq 3 \rank G'.
  \end{equation*}
Therefore \(\prod SU(2)\) is a finite covering group of \(G'\).
This implies the statement about the finite covering group of \(G\).

It follows that \(G/T=\prod S^2\).
Therefore \(M\) is a \(S^2\)-bundle over \(\prod S^2\).
\end{proof}

Now Theorem~\ref{sec:thm2} follows from Theorem~\ref{sec:quas-manif-with-3} and Lemma~\ref{sec:quas-manif-with-2}.

\bibliography{diss}{}

\providecommand{\bysame}{\leavevmode\hbox to3em{\hrulefill}\thinspace}
\providecommand{\MR}{\relax\ifhmode\unskip\space\fi MR }
\providecommand{\MRhref}[2]{%
  \href{http://www.ams.org/mathscinet-getitem?mr=#1}{#2}
}
\providecommand{\href}[2]{#2}
\begin{thebibliography}{10}

\bibitem{0158.20503}
A.~Borel, \emph{{Topics in the homology theory of fibre bundles.}},
  {Berlin-Heidelberg-New York: Springer-Verlag}, 1967 (English).

\bibitem{davis91:_convex_coxet}
M.~Davis and T.~Januszkiewicz, \emph{Convex polytopes, coxeter orbifolds and
  torus actions}, Duke Math. J. \textbf{62} (1991), no.~2, 417--451.

\bibitem{1145.53023}
K.~Grove, B.~Wilking, and W.~Ziller, \emph{{Positively curved cohomogeneity one
  manifolds and 3-Sasakian geometry.}}, J. Differ. Geom. \textbf{78} (2008),
  no.~1, 33--111 (English).

\bibitem{0623.57024}
V.~Hauschild, \emph{{The Euler characteristic as an obstruction to compact Lie
  group actions.}}, Trans. Am. Math. Soc. \textbf{298} (1986), 549--578
  (English).

\bibitem{0429.57011}
W.~Y. Hsiang, \emph{{Cohomology theory of topological transformation groups.}},
  {Ergebnisse der Mathematik und ihrer Grenzgebiete. Band 85.
  Berlin-Heidelberg-New York: Springer-Verlag.}, 1975 (English).

\bibitem{kuroki_pre_2_2009}
S.~Kuroki, \emph{Classification of quasitoric manifolds with codimension one
  extended actions}, Preprint (2009).

\bibitem{kuroki_pre_1_2009}
\bysame, \emph{Characterization of homogeneous torus manifolds}, Osaka J. Math.
  \textbf{47} (2010), no.~1, 285--299 (English).

\bibitem{semifree}
M.~Masuda and T.~E. Panov, \emph{Semi-free circle actions, {B}ott towers, and
  quasitoric manifolds}, Mat. Sb. \textbf{199} (2008), no.~8, 95--122.

\bibitem{0599.57016}
J.~Parker, \emph{{4-dimensional G-manifolds with 3-dimensional orbits.}}, Pac.
  J. Math. \textbf{125} (1986), 187--204 (English).

\bibitem{wiemeler_phd}
M.~Wiemeler, \emph{On the classification of torus manifolds with and without
  non-abelian symmetries}, Ph.D. thesis, University of Fribourg, 2010.

\end{thebibliography}
\bibliographystyle{amsplain}
\end{document}